\numberwithin{equation}{section}
\theoremstyle{plain}
\newtheorem{theorem}{Theorem}[section]
\newtheorem{lemma}[theorem]{Lemma}
\newtheorem{prop}[theorem]{Proposition}
\newtheorem{coro}[theorem]{Corollary}
\newtheorem{theoremA}{Theorem}
\theoremstyle{definition}
\newtheorem{example}[theorem]{Example}
\newtheorem{rema}[theorem]{Remark}
\newcommand{\bR}{\mathbb{R}}
\newcommand{\bQ}{\mathbb{Q}}
\newcommand{\bN}{\mathbb{N}}
\newcommand{\bP}{\mathbb{P}}
\newcommand{\cO}{\mathcal{O}}
\DeclareMathOperator{\Bs}{\mathrm{Bs}}
\DeclareMathOperator{\Hc}{\mathrm{\mathrm{H}^0}}
\DeclareMathOperator{\codim}{\mathrm{codim}}
\DeclareMathOperator{\vol}{\mathrm{vol}}
\DeclareMathOperator{\ord}{\mathrm{ord}}
\DeclareMathOperator{\mult}{\mathrm{mult}}
\author[Fran\c{c}ois Balla\"{y}]{\small{Fran\c{c}ois Balla\"{y}}}
\title[Lower bounds for Seshadri constants via successive minima]{Lower bounds for Seshadri constants via successive minima of line bundles}
\date{\today}
\begin{document}

\begin{abstract} Given  a nef and big line bundle $L$ on a projective variety $X$ of dimension $d \geq 2$, we prove that the Seshadri constant of $L$ at a very general point is larger than $(d+1)^{\frac{1}{d}-1}$. This slightly improves the lower bound $1/d$ established by Ein, Küchle and Lazarsfeld. The proof relies on the concept of successive minima for line bundles recently introduced by Ambro and Ito.
\end{abstract}

\maketitle

 Let $X$ be a projective variety of dimension $d\geq 2$ over an uncountable algebraically closed field $k$ of characteristic zero. Given a nef and big line bundle $L$ on $X$, the Seshadri constant of $L$ at a closed point $x \in X(k)$ is a numerical invariant introduced by Demailly \cite{Dem92} measuring the local positivity of $L$ at $x$. It is defined by
\[\epsilon(L,x) = \inf_{C \ni x} \frac{L\cdot C}{\mult_xC},\]
where the infimum is over all integral curves in $X$ passing through $x$. Seshadri constants  have received a lot of attention since their introduction, and have become an important tool to study the geometry of projective varieties. We refer the reader to \cite[Chapter 5]{Lazarsfeld} for motivation and background on this invariant.  Lower bounds for Seshadri constants are particularly important, notably because they provide effective results for the existence of global sections of adjoint bundles and generic statements towards Fujita's freeness conjecture  (see \cite[Proposition 6.8]{Dem92}, \cite[Theorem A]{Deng}). 

Examples of Miranda show that one cannot expect an absolute lower bound for $\epsilon(L,x)$ to hold at every point: for every $\delta > 0$, there exist $X$, $L$ and $x$ as above such that $\epsilon(L,x) < \delta$. However, a celebrated conjecture of Ein and Lazarsfeld predicts that the lower bound $1$ holds at very general points, namely that there exists a countable union $ \mathcal{B} \varsubsetneq X$ of proper subvarieties such that $\epsilon(L,x) \geq 1$ for every closed point $x\in X \setminus \mathcal{B}$ (\cite[Problem 3.3]{EL}, \cite[Conjecture 5.2.4]{Lazarsfeld}). When $X$ is a surface, this conjecture is a theorem of Ein and Lazarsfeld  
 \cite{EL}. In arbitrary dimension, Ein, Küchle and Lazarsfeld \cite{EKL} obtained the lower bound $1/d$. The approach of \cite{EKL} was subsequently refined by Nakamaye \cite{Na04}, who proved that 
 \begin{equation}\label{eqNak}
 \epsilon(L,x) > \frac{3d +1}{3d^2}
 \end{equation}
for a very general point $x \in X(k)$. To the best of our knowledge, this is the sharpest lower bound known up to now at this level of generality. In dimension $3$, Cascini and Nakamaye \cite{CasciniNakamaye} obtained the lower bound $ \epsilon(L,x) > 1/2$. Our main result improves the lower bound \eqref{eqNak} as follows (see section \ref{sectionproofmain} for more precise statements).
\begin{theoremA}\label{thmmainintro} For a very general point $x \in X(k)$, we have 
\[\epsilon(L,x) \geq  (d+1)^{\frac{1}{d}-1}.\]
\end{theoremA}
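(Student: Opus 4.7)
The strategy is to apply the framework of successive minima $\mu_1(L;x) \ge \cdots \ge \mu_d(L;x)$ of Ambro and Ito, which refines the Seshadri constant by capturing positivity of $L$ at $x$ in all codimensions simultaneously. I anticipate two key ingredients from this framework. The first is that the smallest minimum is bounded above by the Seshadri constant, $\mu_d(L;x) \le \epsilon(L,x)$. The second is a Minkowski--type inequality relating the product of the $\mu_i$ to the top self-intersection, which I expect in the form
\[
\prod_{i=1}^d \mu_i(L;x) \ge (L^d).
\]
Since $L$ is a nef and big integral line bundle on a $d$-dimensional variety, $(L^d)\ge 1$.

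The core of the proof is to establish, at a very general point, an upper bound on every minimum of the shape
\[
\mu_i(L;x)\le (d+1)\,\epsilon(L,x)\qquad (i=1,\dots,d).
\]
Combining this with the product inequality yields $(d+1)^{d-1}\epsilon(L,x)^d\ge (L^d)\ge 1$, which rearranges to the claimed bound $\epsilon(L,x)\ge (d+1)^{1/d-1}$. The content is carried by the constant $(d+1)^{d-1}$; the optimisation itself is an AM--GM-type step, in which $d-1$ factors are bounded by $(d+1)\epsilon$ and one factor by $\epsilon$.

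The \emph{main obstacle} is thus the upper bound $\mu_i(L;x)\le (d+1)\,\epsilon(L,x)$ at a very general point. I would argue by contradiction in the spirit of Ein--K\"uchle--Lazarsfeld and Nakamaye: if the inequality failed on a Zariski-dense set of $x$, then by a countability argument each such $x$ would carry a family of sections of some $mL$ with prescribed vanishing at $x$, and a Nakamaye-type differentiation / specialisation procedure should produce a positive-dimensional family of subvarieties through $x$ whose degrees violate the assumed Seshadri inequality at a generic point. The constant $d+1$ (the number of vertices of a $d$-simplex) should emerge from the optimal choice of vanishing order in this differentiation, which is precisely where the improvement over the previous $1/d$ and $(3d+1)/(3d^2)$ bounds is to be gained. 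A secondary technical point is matching normalisations between the Ambro--Ito product inequality and the classical self-intersection $(L^d)$, which affects constants but not the final exponent.
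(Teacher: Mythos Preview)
Your proposal has two genuine gaps that break the argument.

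First, the Minkowski-type inequality goes the other way. Ambro and Ito prove
\[
\prod_{i=1}^d \epsilon_i(L) \;\le\; \vol(L) \;\le\; d!\prod_{i=1}^d \epsilon_i(L),
\]
so what you call $\prod \mu_i \ge L^d$ is false in general. For instance, on $\bP^1\times\bP^1$ with $L=\cO(2,1)$ one has $\epsilon_1=3$, $\epsilon_2=1$, $\prod\epsilon_i=3$, but $L^2=4$. Using the correct upper bound $\vol(L)\le d!\prod\epsilon_i$ in your scheme would cost an extra factor of $d!$ and already ruin the target constant.

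Second, and more seriously, your ``main obstacle'' inequality $\mu_i(L;x)\le (d+1)\,\epsilon(L,x)$ is simply false, even at every point. On $X=(\bP^1)^d$ with $L=\cO(N,1,\dots,1)$ one has $\epsilon_d(L,x)=1$ and $\epsilon_1(L,x)=N+d-1$ for all $x$, so the ratio $\epsilon_1/\epsilon_d$ is unbounded. No EKL-type differentiation argument can produce this bound, because there is nothing to contradict: the Seshadri constant here is already $1$, comfortably above $(d+1)^{1/d-1}$. The constant $(d+1)$ does not arise as a universal bound on $\epsilon_1/\epsilon_d$.

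The paper's route is different in kind: it controls the \emph{gaps} $\epsilon_i(L)-\epsilon_{i+1}(L)$ rather than the ratios. A refined volume inequality of Ambro and Ito (comparing $\vol(L)$ to the volume of a staircase polytope built from the $\epsilon_i$) yields
\[
\max_{1\le i\le d}\bigl(\epsilon_i(L)-\epsilon_{i+1}(L)\bigr)\;\ge\;\frac{\sqrt[d]{(d+1)\,L^d}}{d+1}.
\]
When the maximal gap occurs at some $i\le d-1$, the base loci $\Bs|\mathcal I_x^{t+}L|_\bQ$ for $t$ in that gap share a common irreducible component $Z\ni x$, and the EKL differentiation lemma is applied \emph{on $Z$} to show $\epsilon(L,z)\ge\min\{\text{gap},\,\epsilon(L_{|Z},z)\}$ for general $z\in Z$. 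One then inducts on $\dim Z$. So differentiation is indeed the engine, but it is used to descend to a subvariety and run an induction, not to bound $\epsilon_1$ by a multiple of $\epsilon_d$.
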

 In particular, this theorem shows that $ \epsilon(L,x) > 1/(d-\frac{1}{2})$ for every $d \geq 4$, and that $ \epsilon(L,x) > 1/(d-1)$ if $d \geq 7$. It can therefore be considered as a partial generalization of the main result of \cite{CasciniNakamaye} to higher dimensions.
 
  Our proof of Theorem \ref{thmmainintro} is strongly inspired by the methods of \cite{EKL} and \cite{Na04}. In particular, it relies crucially on a powerful differentiation result due to Ein, Küchle and Lazarsfeld \cite{EKL}. The novelty in our approach is to combine the latter with the notion of successive minima for line bundles recently introduced by Ambro and Ito \cite{AmbroIto}. This significantly simplifies  the ``gap argument" and  the construction of a flag of auxiliary subvarieties used in \cite{EKL, Na04}. In order to outline our strategy, we fix a very general point $x \in X(k)$. The successive minima of $L$ at $x$ form a chain of real numbers 
\[0 = \epsilon_{d+1}(L,x)\leq \epsilon_d(L,x) \leq \epsilon_{d-1}(L,x) \leq \cdots \leq \epsilon_2(L,x) \leq \epsilon_1(L,x),\]
measuring the local positivity of $L$ at $x$, with $\epsilon_d(L,x)  = \epsilon(L,x) $ and $\epsilon_1(L,x) \geq \sqrt[d]{L^d} \geq 1$. More precisely, for a real number $t$ we let $\Bs |\mathcal{I}_x^{t+}L|_\bQ$ be the locus where all the global sections $s$ of $mL$ with $\ord_x s > mt$ vanish, $m \in \bN$. The numbers $\epsilon_i(L,x)$ detect the jumps in the codimension at $x$ of $\Bs |\mathcal{I}_x^{t+}L|_\bQ$ when $t\geq 0$ varies. Assume that there exists $i \in \{1, \ldots, d-1\}$ such that $\epsilon_i(L,x) > \epsilon_{i+1}(L,x)$, and let $t,t'$ be rational numbers with  
\[\epsilon_{i+1}(L,x) < t < t' < \epsilon_{i}(L,x).\]
It follows from the definition of the successive minima that the base loci  $\Bs |\mathcal{I}_x^{t+}L|_\bQ \subseteq \Bs |\mathcal{I}_x^{t'+}L|_\bQ$  share an irreducible component $Z$ containing $x$. This implies that for a sufficiently general point $z \in Z(k)$, we have 
 ${L \cdot C}\geq \epsilon(L_{|Z}, z){\mult_z C}$
for every curve $C \ni z$ contained in $\Bs |\mathcal{I}_x^{t'+}L|_\bQ$. On the other hand, the differentiation result of \cite{EKL} (see Lemma \ref{lemmadiff}) implies that 
 $L \cdot C\geq (t'-t)\mult_z C $
 for any curve $C \ni z$ with $ C \nsubseteq \Bs |\mathcal{I}_x^{t'+}L|_\bQ$. Using a classical semi-continuity property for Seshadri constants (Lemma \ref{lemmaverygeneral}), we obtain 
 \[\epsilon(L,x) \geq \epsilon(L,z) = \inf_{C \ni z } \frac{L \cdot C}{\mult_z C}  \geq \min\{t'-t, \epsilon(L_{|Z}, z) \}.\]
 Arguing by induction on the dimension, all that remains to prove Theorem \ref{thmmainintro} is to control the gaps $\epsilon_i(L,x) - \epsilon_{i+1}(L,x)$ between successive minima. To do so, we use a variant of Minkowski's second theorem due to Ambro and Ito, from which we derive the inequality 
 \[\max_{1 \leq i \leq d} \epsilon_i(L,x) - \epsilon_{i+1}(L,x) \geq \frac{\sqrt[d]{(d+1)L^d}}{d+1} \geq  (d+1)^{\frac{1}{d}-1}\]
 (see Lemma \ref{ineqmin}). 
 
 The above strategy actually leads to more precise versions of Theorem \ref{thmmainintro}, that we shall state and prove in section \ref{sectionproofmain}. In section \ref{sectionprelim} we recall preliminary results, including the differentiation result from \cite{EKL} as well as the definition and important facts on successive minima for line bundles from \cite{AmbroIto}.

\subsection*{Notation and conventions}\label{sectiondef}  We fix an uncountable algebraically closed field $k$ of characteristic zero, and all schemes are defined over $k$.
 A projective variety $X$ is an integral projective scheme. 
  We say that a property is true at a very general point of  $X$ if it holds for all $x \in X(k)$ outside a given countable union of proper subvarieties in $X$. 
   The volume of a line bundle $L$ on $X$ is the quantity 
\[\vol(L) := \limsup_{m \rightarrow \infty} \frac{h^0(X,mL)}{m^{\dim X}/(\dim X)!}.\]
 We say that $L$ is big if $\vol(L) > 0$. If $L$ is big, there exists a open subset $U$ of $X$ such that $L_{|Z}$ is big for every subvariety $Z$ with $Z \cap U \ne \emptyset$.  When $L$ is nef, we have $\vol(L)=L^d$ by the asymptotic Riemann-Roch theorem.

\section{Seshadri constants, order of global sections and successive minima}\label{sectionprelim}
 This section contains the preliminary material that we need for the proof of Theorem \ref{thmmainintro}. We recall a well-known semi-continuity property for Seshadri constants in subsection \ref{paragSesh}, and a consequence of the differentiation lemma from \cite{EKL} in subsection \ref{paragorder}. Subsections \ref{paragmin} and \ref{paragmingeneral} are devoted to successive minima for line bundles, following Ambro and Ito \cite{AmbroIto}. Throughout this section, $X$ denotes a projective variety of dimension $d \geq 1$ and $L$ is a line bundle on $X$. 
\subsection{Seshadri constants at very general points}\label{paragSesh} The following lemma is well-known, and it is essentially  \cite[Lemma 1.4]{EKL}.
\begin{lemma}\label{lemmaverygeneral} Assume that $L$ is  nef and big. For any smooth point $y \in X(k)$ and any $\delta > 0$, there exists a dense open subset $U\subseteq X$ with 
\[U(k) \subseteq \{x \in X(k) \ | \ \epsilon(L,x) >  \epsilon(L,y)- \delta\}.\]
\end{lemma}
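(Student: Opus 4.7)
The plan is to deduce the lemma from the openness of ampleness in a smooth flat family, applied to the universal blow-up of $X$ along the diagonal of $X \times X$.

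First, I would construct the flat family $q := p_2 \circ \pi : \mathcal{X} \to X$, where $\pi : \mathcal{X} \to X \times X$ is the blow-up along the diagonal with exceptional divisor $\mathcal{E}$. The fiber of $q$ over $x$ is the blow-up $\mu_x : \text{Bl}_x X \to X$ of $X$ at $x$, with exceptional divisor $E_x = \mathcal{E}|_{q^{-1}(x)}$; on the preimage of the smooth locus of $X$, the morphism $q$ is smooth. Setting $\mathcal{L} := \pi^* p_1^* L$, one has $\mathcal{L}|_{q^{-1}(x)} = \mu_x^* L$ for every smooth $x \in X(k)$.

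Next, I would fix $\alpha \in \bQ$ with $\epsilon(L,y) - \delta < \alpha < \epsilon(L,y)$. By the blow-up characterization of the Seshadri constant, $\mathcal{L}_y - \alpha \mathcal{E}_y$ is nef on $\text{Bl}_y X$, and even ample if $L$ itself is ample. In the ample case, the openness of ampleness in smooth projective flat families (\cite[Proposition 1.4.14]{Lazarsfeld}) yields an open neighborhood $V \subseteq X$ of $y$ on which $\mathcal{L}_x - \alpha \mathcal{E}_x$ remains ample, giving $\epsilon(L, x) > \alpha > \epsilon(L,y) - \delta$ for $x \in V(k)$. Since $X$ is irreducible and $y \in V$, this $V$ is automatically dense.

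The hard part will be the general nef and big case. My strategy would be to fix an ample line bundle $A$ on $X$ and pass to $L + \eta A$ for a small rational $\eta > 0$: by super-additivity of the Seshadri constant, $\epsilon(L + \eta A, y) \geq \epsilon(L, y) + \eta\,\epsilon(A, y) > \alpha$, and the ample case applied to $L + \eta A$ produces an open $V_\eta \ni y$ on which $\epsilon(L + \eta A, \cdot) > \alpha$. The main obstacle is to translate this into a lower bound for $\epsilon(L, \cdot)$ on a dense open subset (rather than merely a very general subset obtained by intersecting the $V_\eta$): one must control the correction term $\eta\, A\cdot C/\mult_x C$ uniformly over curves of small Seshadri ratio. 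Following \cite[Lemma 1.4]{EKL}, I would argue that in a neighborhood of $y$ only Hilbert scheme components of curves with $A$-degree below an explicit bound can produce ratios close to $\epsilon(L,y)$, and each such component, by the assumption $\epsilon(L,y) > \alpha$, either yields the generic good ratio $>\alpha$ on a dense open subset or sweeps out only a proper closed subvariety of $X$.
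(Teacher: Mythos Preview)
Your approach is essentially the same as the paper's. The paper's proof consists only of two citations---\cite[Proposition 2.5.12]{DFEM} for the ample case and \cite[Proof of Lemma 1.4]{EKL} for the nef and big reduction---and your proposal simply unpacks what those references contain: the universal blow-up along the diagonal together with openness of ampleness for the ample case, and the EKL Hilbert-scheme argument for passing from ample to nef and big.
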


\begin{proof} In the case where $L$ is ample, the result is \cite[Proposition 2.5.12]{DFEM}. The general case follows as in \cite[Proof of Lemma 1.4]{EKL}.  

\end{proof}

 If $X^{\mathrm{sm}}$ denotes the smooth locus of $X$, it follows from Lemma \ref{lemmaverygeneral} that the supremum 
\[\epsilon(L,1) = \sup_{x \in X^{\mathrm{sm}}(k)} \epsilon(L,x)\]
is actually a maximum, and that $\epsilon(L,1) = \epsilon(L,x)$ for a very general $x \in X(k)$. In particular, we have 
$\epsilon(L,1) =\max_{x \in X^{\mathrm{sm}}(k)} \epsilon(L,x) \geq \epsilon(L,y)$ for any $y \in X^{\mathrm{sm}}(k)$. 
 
\subsection{Order of global sections}\label{paragorder}

 Let $s \in \Hc(X,L)$ be a global section. We denote by $\ord_x s$ the order of $s$ at a closed point $x \in X(k)$, defined to be supremum of the integers $m \geq 0$ such that $s \in \mathcal{I}_x^mL$, where $\mathcal{I}_x$ is the ideal defining $x \in X$. By definition $\ord_x s = +\infty$ if $s =0$ and $\ord_x s \in \bN$ otherwise. We denote by $Z(s) \subseteq X$ the closed subset on which $s$ vanishes. 

Let $t\geq 0$ be a real number and let $m \geq 0$ be an integer. We denote by $R_m^t(L)$ the linear subspace of $R_m(L) := \Hc(X,mL)$ consisting of the global sections $s \in R_m(L)$ with $\ord_x s > mt$. Following Ambro and Ito \cite[Section 2]{AmbroIto}, we define a closed subset of $X$ by
\[\Bs |\mathcal{I}_x^{t+}L|_\bQ = \bigcap_{m \in \bN} \bigcap_{s \in R_m^t(L)} Z(s).\]
The following consequence of \cite[Proposition 2.3]{EKL} is a key ingredient in our proof of Theorem \ref{thmmainintro}.
\begin{lemma}[\cite{AmbroIto}, Lemma 2.18]\label{lemmadiff} 
 Let $t \geq 0$ be a real number. Let $x \in X(k)$ be a very general point and let $Z \subseteq  \Bs |\mathcal{I}_x^{t+}L|_\bQ$ be an irreducible component containing $x$. Then for any integers $p,q \geq 1$ and any $s \in \Hc(X,\mathcal{I}_x^{p}(qL))$, we have $\ord_z s \geq p-qt$ for every $z \in Z(k)$ in the smooth locus of $X$. 

\end{lemma}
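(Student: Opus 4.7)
The plan is to deduce the statement from the Ein–Küchle–Lazarsfeld differentiation result, \cite[Proposition 2.3]{EKL}, which is precisely a mechanism to transfer vanishing of a section at a very general point $x$ to points $z$ on a subvariety $Z \ni x$ that lies in a base locus defined by vanishing conditions at $x$. The Ambro–Ito statement reformulates this transfer in terms of the asymptotic base locus $\Bs |\mathcal{I}_x^{t+}L|_\bQ$, so the work will be (i) to unwind the hypothesis on $Z$ into a concrete base-locus statement of the form needed by \cite{EKL}, and (ii) to apply the differentiation proposition.

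For step (i), I would fix a rational $b$ with $t < b$ and use the definition of $\Bs |\mathcal{I}_x^{t+}L|_\bQ$ to find an integer $m \geq 1$ and finitely many sections $\tau_1, \ldots, \tau_N \in H^0(X, mL)$ with $\ord_x \tau_j > mb \ (> mt)$ such that $Z$ is an irreducible component of the scheme-theoretic intersection $\bigcap_j Z(\tau_j)$ in a neighborhood of $x$ (and of $z$). Thus $Z$ is a component of $\Bs|\mathcal{I}_x^{\lceil mb\rceil}(mL)|$ in the sense relevant to \cite{EKL}. For step (ii), the very general hypothesis on $x$ lets us spread the data: after shrinking the dense open subset of admissible $x$'s, the tuple $(\tau_1, \ldots, \tau_N, Z_x)$ varies algebraically with $x$ over a smooth parameter scheme $T$, and the given section $s \in H^0(X, \mathcal{I}_x^p(qL))$ extends to a section $\tilde s$ on $X \times T$ that vanishes to order $\geq p$ along the appropriate diagonal section. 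Applying \cite[Proposition 2.3]{EKL} to this family then produces the inequality $\ord_z s \geq p - qb$ at any $z \in Z \cap X^{\mathrm{sm}}$; letting $b \downarrow t$ through rationals yields the required bound $\ord_z s \geq p - qt$.

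The technical heart, and what I expect to be the main obstacle, is the family construction underlying step (ii): one has to justify that for a very general $x$, the irreducible component $Z$, the witnessing sections $\tau_j$, and the given section $s$ can all be put into a single algebraic family parameterized by $x$. This is exactly where countability of the excluded ``bad'' set enters: there are only countably many Hilbert polynomials and countably many candidate families of base-locus components, so a very general $x$ avoids the degeneration loci for each of them simultaneously. Once this universal family is in place, the differentiation at the heart of \cite[Proposition 2.3]{EKL}---taking derivatives of $\tilde s$ along vector fields on $T$ to convert vanishing at $(x,x) \in X \times T$ into vanishing at nearby $(z, x)$---delivers the estimate essentially formally, and no further numerical work is needed beyond bookkeeping of how many derivatives have been taken and at what cost in vanishing order.
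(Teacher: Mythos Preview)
Your overall strategy---unwinding the hypothesis to realize $Z$ as a component of a concrete base locus $\Bs|\mathcal{I}_x^{\ell}(mL)|$ and then invoking the EKL differentiation mechanism in a family parametrized by the very general point---is essentially a sketch of how \cite[Lemma~2.18]{AmbroIto} itself is proved. The paper does not redo any of this: it simply cites that lemma as a black box, which already yields a dense open subset $U \subseteq Z$ on which $\ord_z s \geq p - qt$. So your route is longer but not wrong in spirit; you are reproving the cited input rather than using it.

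The one place the paper actually adds something, and the one genuine gap in your proposal, is the passage from a dense open $U \subseteq Z$ to \emph{every} $z \in Z(k)$ lying in $X^{\mathrm{sm}}$. The differentiation argument (whether you phrase it as \cite[Proposition~2.3]{EKL} or as \cite[Lemma~2.18]{AmbroIto}) only delivers the inequality over the open locus where the family data spread cleanly; it does not automatically say anything at special points of $Z$, even when those are smooth in $X$. The paper closes this by invoking the upper semi-continuity of $z \mapsto \ord_z s$ on the smooth locus $X^{\mathrm{sm}}$: the condition $\ord_z s \geq p - qt$ is then Zariski-closed on $X^{\mathrm{sm}}$, so it propagates from the dense open $U \cap X^{\mathrm{sm}}$ to its closure $Z \cap X^{\mathrm{sm}}$. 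Your sentence ``produces the inequality \ldots\ at any $z \in Z \cap X^{\mathrm{sm}}$'' skips exactly this step; you should insert the semi-continuity argument to justify it.
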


\begin{proof} By \cite[Lemma 2.18]{AmbroIto}, there exists a dense open subset $U \subseteq Z$ such that $\ord_z s \geq p-qt$ for every $z \in U(k)$. On the other hand, the function 
$x \mapsto \ord_x s$ is upper semi-continuous 
 on the smooth locus $X^{\mathrm{sm}}$ of $X$ (see \cite[section 2]{EKL}). Therefore the Zariski-closure $\overline{U}$ of $U \cap X^{\mathrm{sm}}$ in $X^{\mathrm{sm}}$ satisfies 
\[\overline{U}(k) \subseteq \{ x \in X^{\mathrm{sm}}(k) \ | \ \ord_x s \geq p-qt\},\]
and the result follows since $\overline{U} = Z \cap X^{\mathrm{sm}}$ by construction.
\end{proof}

\subsection{Successive minima for line bundles}\label{paragmin}  
 Let $x\in X(k)$ be a closed point.
  For any positive integer $i$, Ambro and Ito \cite{AmbroIto} introduced   the \emph{$i$-th successive minimum of $L$ at $x$} defined by
\[\epsilon_i(L,x) = \inf \{t \geq 0 \ | \ \codim_x \Bs |\mathcal{I}_x^{t+}L|_\bQ < i\}.\]
These real numbers form a chain 
\[\epsilon_1(L,x) \geq \epsilon_2(L,x) \geq \cdots \geq \epsilon_d(L,x) \geq \epsilon_{d+1}(L,x)=0.\]
By \cite[Corollary 3.3]{AmbroIto},  the extremal minima can be compared to the volume of $L$ as follows.
\[\epsilon_d(L,x) \leq \sqrt[d]{\frac{\vol(L)}{\mult_x X}} \leq \epsilon_1(L,x).\]
 It is a non-trivial fact that the last minimum $\epsilon_d(L,x)$ coincides with the Seshadri constant of $L$ at $x$ when $L$ is nef.
 \begin{prop}[\cite{AmbroIto}, Proposition 2.20]\label{propSeshmin} If $L$ is nef, then $\epsilon_d(L,x) = \epsilon(L,x)$. 
\end{prop}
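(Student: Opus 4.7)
The plan is to prove the two inequalities $\epsilon_d(L,x)\leq \epsilon(L,x)$ and $\epsilon(L,x)\leq \epsilon_d(L,x)$ separately; the nefness of $L$ will only be needed for the second.

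\emph{Easy direction $\epsilon_d(L,x)\leq \epsilon(L,x)$.} For any integral curve $C\ni x$ and any rational $t>(L\cdot C)/\mult_x C$, I would show that $C\subseteq \Bs|\mathcal{I}_x^{t+}L|_\bQ$. For any $s\in R_m^t(L)$ with $s|_C\neq 0$, pulling $s$ back to the normalization $C^\nu$ would give a nonzero section of $mL|_{C^\nu}$ of degree $m(L\cdot C)$ whose total order of vanishing at the preimages of $x$ is at least $(\ord_x s)\cdot\mult_x C>mt\cdot\mult_x C>m(L\cdot C)$, which is impossible. Hence $C\subseteq Z(s)$ for every such $s$, giving $C\subseteq \Bs|\mathcal{I}_x^{t+}L|_\bQ$ and thus $\codim_x\Bs|\mathcal{I}_x^{t+}L|_\bQ <d$, i.e.\ $\epsilon_d(L,x)\leq t$. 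Letting $t\searrow(L\cdot C)/\mult_x C$ and then minimizing over $C$ concludes. This part uses neither nefness nor Lemma~\ref{lemmadiff}.

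\emph{Hard direction $\epsilon(L,x)\leq \epsilon_d(L,x)$.} For each rational $t>\epsilon_d(L,x)$ the plan is to exhibit an integral curve $C\ni x$ with $L\cdot C\leq t\cdot\mult_x C$. By the definition of $\epsilon_d$ one can fix $t_0\in(\epsilon_d(L,x),t)$ and an irreducible component $Z\ni x$ of $\Bs|\mathcal{I}_x^{t_0+}L|_\bQ$ of dimension at least one; the desired curve will lie in $Z$. I would proceed by induction on $\dim Z$, using Lemma~\ref{lemmadiff} as the main tool: it converts high-order vanishing of global sections of $mL$ at $x$ into high-order vanishing along all of $Z$, which allows one to translate the condition $Z\subseteq \Bs|\mathcal{I}_x^{t_0+}L|_\bQ$ on $X$ into a similar condition on the base loci of the nef line bundle $L|_Z$ at $x$ on $Z$. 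The inductive hypothesis applied to $(Z,L|_Z,x)$ then produces a lower-dimensional component, and iterating eventually yields a curve $C\subseteq Z$ with $L\cdot C\leq t_0\cdot \mult_x C<t\cdot\mult_x C$.

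\emph{Main obstacle.} The crucial difficulty lies in the inductive step of the hard direction. The Bézout restriction argument of the easy direction produces no information when applied to curves inside $Z$, because every $s\in R_m^{t_0}(L)$ vanishes identically on $Z$ by the very choice of $Z$. Lemma~\ref{lemmadiff} must therefore be applied either to sections of $mL$ whose order at $x$ lies just below $mt_0$ (so they do not automatically vanish on $Z$) or to components of $\Bs|\mathcal{I}_x^{t_1+}L|_\bQ$ with $t_1<t_0$, in order to propagate vanishing along $Z$ in a form usable for the inductive comparison between successive minima of $L$ at $x$ and of $L|_Z$ on $Z$. Nefness of $L$ is essential here both to give clean intersection-theoretic meaning to the resulting bound and to ensure that $L|_Z$ remains nef throughout the induction.
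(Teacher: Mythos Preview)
The paper does not prove this proposition; it is quoted from \cite[Proposition~2.20]{AmbroIto} and used as a black box, so there is no proof in the paper to compare your proposal against.

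On the merits of your proposal itself: the easy direction $\epsilon_d(L,x)\leq \epsilon(L,x)$ is correct as you wrote it. The hard direction, however, has two genuine problems. First, Lemma~\ref{lemmadiff} is stated only for a \emph{very general} point $x$ (it rests on the differentiation technique of \cite{EKL}, which requires moving $x$ in a family), whereas Proposition~\ref{propSeshmin} is asserted for every closed point; your plan therefore cannot establish the full statement. Second, even restricting to very general $x$, you have not given a proof but only a wish list: your ``Main obstacle'' paragraph correctly observes that every section in $R_m^{t_0}(L)$ restricts to zero on $Z$, so nothing is learned about $L|_Z$, and you then merely speculate about possible remedies without carrying any out. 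In fact it is unclear how the induction could be set up at all, since the hypothesis ``$Z$ is a base-locus component for $L$ on $X$'' does not translate into a bound on $\epsilon_{\dim Z}(L|_Z,x)$, which is what the inductive hypothesis would require. The argument in \cite{AmbroIto} does not go through differentiation or induction on base-locus components; it uses the blow-up interpretation of both invariants (cf.\ Remark~\ref{remarkwidth}) together with positivity properties of $\pi_x^*L-tE_x$ for $t<\epsilon(L,x)$, which are available at an arbitrary point.
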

 The following remark gives  an alternative description of the first minimum.
 \begin{rema}\label{remarkwidth} If $L$ is big, then 
 \[\epsilon_1(L,x) = \sup \left\{\frac{\ord_x s}{m} \ | \ s \in \Hc(X,mL) \setminus \{0\}, \  m\geq 1 \right\}\]
 for any $x \in X(k)$.
 If moreover $x$ is a smooth point, then 
 \[\epsilon_1(L,x) = \sup \{ t \in \bQ_{\geq 0} \ | \ \pi_x^*L - tE_x \ \text{ is big } \},\]
 where $\pi_x \colon \mathrm{Bl}_x X \rightarrow X$ is the blow-up at $x$ and $E_x$ is the exceptional divisor (see \cite[page 14]{AmbroIto}). 
 \end{rema}
 
   We now recall two examples from \cite[Remark 3.8]{AmbroIto}.
\begin{example}If $L = \cO_{\bP^d}(a)$ for some integer $a > 0$, then $\epsilon_i(L,x) = a$ for any $x \in \bP^d(k)$ and $i \in \{1, \ldots,d\}$.  
 Let us now consider  the product of $d$ copies of the projective line $X = (\bP^1_k)^d$, $d \geq 1$. Let $L = \cO_X(\omega_1, \ldots, \omega_d)$, where $\omega_1 \geq \cdots \geq \omega_d$ are positive integers. Then $\epsilon_i(L,x) = \sum_{j=i}^d \omega_i$ for any $x \in X(k)$ and $i \in \{1, \ldots,d\}$. In particular, $\epsilon(L,x) = \epsilon_d(L,x) = \omega_d$ and $\epsilon_1(L,x) = \omega_1 + \cdots +\omega_d$ for any $x \in X(k)$.
\end{example}

\subsection{Successive minima at very general points}\label{paragmingeneral}

Successive minima don't satisfy a semi-continuity property analogous to Lemma \ref{lemmaverygeneral} in general. However, there exists a countable union $\mathcal{B}_L = \cup_{n\in \bN} Y_n \varsubsetneq X$ of proper subvarieties such that for any integer $i > 0$, the function $x \mapsto \epsilon_i(L,x)$ is constant on $X(k)\setminus \mathcal{B}_L$  (see \cite[Proposition 2.12]{AmbroIto}). We let $\epsilon_i(L) = \epsilon_i(L,x)$, where $x \notin \mathcal{B}_L$ is a closed point. In particular, $\epsilon_d(L) = \epsilon(L,1)$ is the Seshadri constant of $L$ at a very general point when $L$ is nef (see Proposition \ref{propSeshmin}). 

The following is an analogue  for successive minima of line bundles of Minkowski's second theorem in geometry of numbers.
 \begin{theorem}[\cite{AmbroIto}, Theorem 3.6]\label{thmMinkowski} If $L$ is big, then we have 
 \[\prod_{i=1}^d \epsilon_i(L) \leq \vol(L) \leq d!\prod_{i=1}^d \epsilon_i(L).\]
 \end{theorem}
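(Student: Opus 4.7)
The plan is to reformulate both inequalities as convex-geometric statements about an Okounkov body of $L$ at a very general point $x$, after choosing a flag adapted to the successive minima. Since the functions $x \mapsto \epsilon_i(L,x)$ are constant outside a countable union of proper subvarieties, I would fix a very general smooth point $x \in X(k)$ and work with $\epsilon_i(L) = \epsilon_i(L,x)$. The first step is to build inductively an admissible flag
\[X = Y_0 \supset Y_1 \supset \cdots \supset Y_d = \{x\}\]
of subvarieties smooth at $x$ with $\codim Y_i = i$, by taking at each step $Y_i$ to be an irreducible component through $x$ of $\Bs|\mathcal{I}_x^{(\epsilon_i(L)-\delta)+}L|_\bQ$ for a small $\delta > 0$. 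Such components exist by the definition of the $i$-th minimum, and the nesting $Y_{i+1}\subset Y_i$ is arranged by performing the construction recursively on the restriction $L|_{Y_i}$, whose minima at $x$ should form the tail of the sequence $\epsilon_\bullet(L)$.

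With this flag, I would form the Okounkov body $\Delta = \Delta_{Y_\bullet}(L) \subset \bR^d_{\geq 0}$, whose Euclidean volume equals $\vol(L)/d!$ by the Okounkov-body volume formula for big line bundles. The two inequalities of the theorem then translate into two convex-geometric containments: (i) $\Delta \subseteq \prod_{i=1}^d [0, \epsilon_i(L)]$, which yields $\vol(L) \leq d!\prod_i \epsilon_i(L)$ upon comparing volumes; and (ii) the simplex $\Sigma := \mathrm{conv}(0, \epsilon_1(L)\, e_1, \ldots, \epsilon_d(L)\, e_d)$ of volume $\prod_i \epsilon_i(L)/d!$ is contained in $\Delta$, giving $\prod_i \epsilon_i(L) \leq \vol(L)$. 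The containment (i) is enforced by the flag construction: a section $s \in \Hc(X,mL)$ with $\nu_i(s)/m > \epsilon_i(L)$ would, via Lemma \ref{lemmadiff}, force a codimension jump of $\Bs|\mathcal{I}_x^{t+}L|_\bQ$ at some height strictly below $\epsilon_i(L)$, contradicting the definition of the $i$-th minimum. The containment (ii) is obtained by exhibiting, for each $i$, a sequence of sections whose normalised valuations approach the vertex $\epsilon_i(L)\, e_i$, and invoking convexity of $\Delta$.

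The harder half of the argument is the containment (ii). The upper bound is essentially built into the choice of flag, but producing sections realising each extreme vertex requires combining the existence of sections implicit in the definition of each $\epsilon_i$ with a careful descent along the flag through the restrictions $L|_{Y_{i-1}}$. The main obstacles I anticipate are verifying the compatibility between restriction and successive minima at very general points --- namely that $\epsilon_j(L|_{Y_i}, x)$ coincides with the appropriate tail of $\epsilon_\bullet(L,x)$ --- and handling the case when several consecutive minima coincide, which causes the recursive flag construction to degenerate and requires inserting generic hyperplane sections of the relevant base-locus components to complete the flag.
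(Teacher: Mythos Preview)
The paper does not prove this theorem; it is quoted from \cite{AmbroIto} as a black box. The only remark the paper adds is that the upper bound $\vol(L)\le d!\prod_i\epsilon_i(L)$ follows from the stronger Proposition~\ref{propineqminvol} (also cited without proof) via the trivial containment $\square(\epsilon_1(L),\ldots,\epsilon_d(L))\subseteq\prod_{i=1}^d[0,\epsilon_i(L)]$. So there is no argument here against which to compare your sketch; you are in effect trying to reconstruct Ambro and Ito's original proof.

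That said, the flag you propose creates a structural mismatch with the invariants. The minima $\epsilon_i(L,x)$ are defined through the single valuation $\ord_x$, whereas the Okounkov coordinates $\nu_1,\ldots,\nu_d$ attached to a flag $Y_\bullet$ record orders of vanishing of successive restrictions along each $Y_i$ inside $Y_{i-1}$; for a flag built from base-locus components these numbers have no direct relation to $\ord_x$. Your justification of containment~(i) therefore breaks: $\nu_i(s)/m>\epsilon_i(L)$ gives no information about $\ord_x s$, so Lemma~\ref{lemmadiff} and the codimension of $\Bs|\mathcal{I}_x^{t+}L|_\bQ$ are not in play. Similarly, for~(ii) a section realising the vertex $\epsilon_i(L)\,e_i$ must vanish to high order along $Y_i$ inside $Y_{i-1}$ while \emph{not} vanishing on $Y_1,\ldots,Y_{i-1}$; the definition of $\epsilon_i$ only supplies sections with large $\ord_x s$, which is a different condition. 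Ambro and Ito circumvent this by passing to the blow-up $\mathrm{Bl}_xX$ and using a flag whose first member is the exceptional divisor $E_x$, continued by a generic linear flag inside $E_x\cong\bP^{d-1}$: then the first Okounkov coordinate literally is $\ord_x$, and both containments follow from the definition of the $\epsilon_i$ and convexity. The restriction compatibility you flag at the end (that $\epsilon_j(L|_{Y_i},x)$ agrees with the tail of $\epsilon_\bullet(L,x)$) is also false in general, so the recursive flag construction would fail independently of the above.
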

 For real numbers $t_1, \ldots, t_d$, we consider the compact convex set $\square(t_1, \ldots, t_d) \subset \bR^d$ defined by
 \[\square(t_1, \ldots, t_d) = \cap_{i=1}^d\{(x_1, \ldots, x_d) \in \bR^d_{\geq 0} \ | \ x_i + \ldots + x_d \leq t_i\}.\]
 With this notation, the upper bound of Theorem \ref{thmMinkowski} is a consequence of the following stronger result due to Ambro and Ito.
 \begin{prop}[\cite{AmbroIto}, Proposition 3.5]\label{propineqminvol} 
  We have
 \[\vol(L) \leq d! \vol(\square(\epsilon_1(L), \ldots, \epsilon_d(L))).\]
 
 \end{prop}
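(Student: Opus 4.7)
The plan is to use the Newton--Okounkov body construction. Fix a very general smooth point $x \in X(k)$, so that $\epsilon_i(L,x) = \epsilon_i(L)$ for every $i$, and choose an admissible flag $X = Y_0 \supset Y_1 \supset \cdots \supset Y_d = \{x\}$ with each $Y_i$ smooth at $x$ of codimension $i$. The associated rank-$d$ valuation $\nu_{Y_\bullet}$ on $k(X)^{\ast}$ produces the Newton--Okounkov body
\[\Delta_{Y_\bullet}(L) = \overline{\bigcup_{m \geq 1} \tfrac{1}{m}\, \nu_{Y_\bullet}\bigl(\Hc(X,mL)\setminus\{0\}\bigr)} \subseteq \bR^d_{\geq 0},\]
whose Euclidean volume equals $\vol(L)/d!$ by the general theory of Newton--Okounkov bodies. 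It therefore suffices to produce a flag such that $\Delta_{Y_\bullet}(L) \subseteq \square(\epsilon_1(L),\ldots,\epsilon_d(L))$.

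The main step is to construct the flag inductively so that for every nonzero $s \in \Hc(X,mL)$ with valuation $\nu_{Y_\bullet}(s) = (a_1,\ldots,a_d)$ and every $1 \leq j \leq d$, the partial sum satisfies $(a_j + a_{j+1} + \cdots + a_d)/m \leq \epsilon_j(L)$. The quantity $(a_j+\cdots+a_d)/m$ should be controlled by the first successive minimum of the restricted bundle $L_{|Y_{j-1}}$ at $x$ (via the characterization of $\epsilon_1$ in Remark \ref{remarkwidth} applied on $Y_{j-1}$, up to a twist by the effective divisors $-a_i Y_i$ whose contribution is asymptotically negligible as $m \to \infty$). I would then pick $Y_j$ at each step as an irreducible component through $x$ of $\Bs|\mathcal{I}_x^{t+}L_{|Y_{j-1}}|_\bQ$ for some $t$ just below $\epsilon_1(L_{|Y_{j-1}},x)$, so that the codimension jumps of the restricted base loci on $Y_{j-1}$ encode the successive-minima data of $L$ on $X$, yielding $\epsilon_1(L_{|Y_{j-1}},x) \leq \epsilon_j(L)$ at each stage.

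The main obstacle is precisely this last inequality: verifying $\epsilon_1(L_{|Y_{j-1}},x) \leq \epsilon_j(L)$ throughout the induction. Using the defining characterization $\epsilon_j(L) = \inf\{t \geq 0 : \codim_x \Bs|\mathcal{I}_x^{t+}L|_\bQ < j\}$, this reduces to showing that restriction to the inductively chosen $Y_{j-1}$ of codimension $j-1$ decreases the codimension of $\Bs|\mathcal{I}_x^{t+}L|_\bQ$ by exactly $j-1$ for the relevant thresholds $t < \epsilon_j(L)$. The very general position of $x$ (combined with the analogue of Lemma \ref{lemmaverygeneral} for successive minima) is used throughout to propagate this control, and an induction on $d$ handles the passage between successive stages of the flag.
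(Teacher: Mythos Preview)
The paper does not prove this proposition; it is quoted from \cite{AmbroIto} without argument, so there is no in-paper proof to compare against. Your overall architecture---show $\Delta_{Y_\bullet}(L)\subseteq\square(\epsilon_1(L),\ldots,\epsilon_d(L))$ for a well-chosen flag and invoke $\vol(\Delta_{Y_\bullet}(L))=\vol(L)/d!$---is correct and is indeed the skeleton of Ambro--Ito's argument. The gap is in how you build the flag and in the inequality you rely on.

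First, your recipe does not determine $Y_j$, and the wrong choice breaks everything. Take $X=\bP^1\times\bP^1$, $L=\cO(2,1)$, so $\epsilon_1(L)=3$ and $\epsilon_2(L)=1$. For $t$ just below $3$ the locus $\Bs|\mathcal{I}_x^{t+}L|_\bQ$ has \emph{two} components through $x$, the two rulings. If $Y_1$ is the ruling on which $L$ restricts to $\cO_{\bP^1}(2)$, then $\epsilon_1(L_{|Y_1},x)=2>1=\epsilon_2(L)$, so your key inequality fails, and the Newton--Okounkov body for that flag is $[0,1]\times[0,2]\not\subseteq\square(3,1)$. Very general position of $x$ does not help, since the example is homogeneous. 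Second, even with a good flag, the bridge you propose between $a_j+\cdots+a_d$ and $\epsilon_1(L_{|Y_{j-1}},x)$ via $\ord_x$ of the restricted section goes the wrong way: for a flag of smooth subvarieties one always has $\ord_x(s_{j-1})\leq a_j+\cdots+a_d$ (restriction can only raise the order), so the bound $\ord_x(s_{j-1})\leq m\,\epsilon_1(L_{|Y_{j-1}},x)$ yields no upper bound on the partial sum. Ambro--Ito avoid both issues by passing to the blow-up $\mathrm{Bl}_xX$ and using an \emph{infinitesimal} flag whose first member is the exceptional divisor $E$ (so the first valuation coordinate is literally $\ord_x$) followed by a general linear flag on $E\cong\bP^{d-1}$; the containment in $\square(\epsilon_1,\ldots,\epsilon_d)$ then comes from relating the fibres of $\Delta$ over $t$ to how $\Bs|\mathcal{I}_x^{t+}L|_\bQ$ meets $E$, not from restricting $L$ to subvarieties of $X$.
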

 Proposition \ref{propineqminvol} gives control on the  gaps between successive minima as follows. 
\begin{lemma}\label{ineqmin}  We have 
\[\max_{1 \leq i \leq d} \epsilon_i(L) - \epsilon_{i+1}(L) \geq  \frac{\sqrt[d]{(d+1)\vol(L)}}{d+1}.\] 
\end{lemma}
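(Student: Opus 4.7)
The plan is to combine Proposition \ref{propineqminvol} with a monotonicity argument. Set $M := \max_{1 \leq i \leq d}(\epsilon_i(L) - \epsilon_{i+1}(L))$ with the convention $\epsilon_{d+1}(L) = 0$. A telescoping sum gives $\epsilon_i(L) = \sum_{j=i}^{d}(\epsilon_j(L) - \epsilon_{j+1}(L)) \leq (d-i+1)M$ for each $i$, and the obvious inclusion $\square(c_1,\ldots,c_d) \subseteq \square(c'_1,\ldots,c'_d)$ when $c_i \leq c'_i$ yields $\square(\epsilon_1(L), \ldots, \epsilon_d(L)) \subseteq \square(dM, (d-1)M, \ldots, M)$. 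Combined with Proposition \ref{propineqminvol}, this reduces the lemma to the volume identity
\[\vol(\square(dM, (d-1)M, \ldots, M)) = \frac{(d+1)^{d-1}M^d}{d!},\]
since substituting into $\vol(L) \leq d!\,\vol(\square(dM,\ldots,M))$ and rearranging yields precisely $M \geq \sqrt[d]{(d+1)\vol(L)}/(d+1)$.

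To prove the identity I would introduce the two-parameter family
\[P_d(a,b) := \vol\bigl(\square(a+(d-1)b,\, a+(d-2)b,\, \ldots,\, a+b,\, a)\bigr),\]
so that the quantity of interest is $P_d(M,M)$. Slicing $\square$ at $x_d = t \in [0,a]$ gives a polytope of the same shape in one fewer variable, producing the recursion
\[P_d(a,b) = \int_0^a P_{d-1}(a+b-t,\, b)\, dt.\]
By the homogeneity $P_d(a,b) = b^d\, p_d(a/b)$ with $p_d(r) := P_d(r,1)$, this reduces to
\[p_d(r) = \int_0^r p_{d-1}(1+\tau)\, d\tau, \qquad p_1(r) = r,\]
and I would prove by induction on $d$ that $p_d(r) = r(r+d)^{d-1}/d!$. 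The inductive step is a one-line derivative computation: $\frac{d}{dr}\bigl[r(r+d)^{d-1}/d!\bigr] = (1+r)(r+d)^{d-2}/(d-1)!$, which is precisely $p_{d-1}(1+r)$ by the inductive hypothesis, and both sides of the recurrence vanish at $r=0$. Evaluating at $r=1$ then gives $P_d(M,M) = M^d p_d(1) = (d+1)^{d-1}M^d/d!$, as required.

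The main obstacle is the exact volume computation: cruder estimates, e.g.\ containing $\square(dM,\ldots,M)$ in the simplex $\{x_1 + \cdots + x_d \leq dM\}$ of volume $d^d M^d/d!$, yield a strictly weaker constant and do not recover the exponent $(d+1)^{d-1}$ needed for Theorem \ref{thmmainintro}. In a different guise, the identity is the classical volume formula for the Pitman--Stanley polytope $\{y \geq 0 : y_1 + \cdots + y_j \leq j \text{ for all } j\}$, but the short inductive verification above keeps the proof self-contained.
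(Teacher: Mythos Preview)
Your proof is correct and follows essentially the same route as the paper. Both arguments introduce the two-parameter family $\square_d(\alpha,\beta)=\square(\alpha+(d-1)\beta,\ldots,\alpha)$, establish the same slicing recursion $\vol(\square_d(\alpha,\beta))=\int_0^\alpha\vol(\square_{d-1}(\alpha+\beta-t,\beta))\,dt$ leading to $d!\vol(\square_d(\alpha,\beta))=\alpha(\alpha+d\beta)^{d-1}$, and combine this with Proposition~\ref{propineqminvol} via the telescoping bound; the only cosmetic difference is that the paper first bounds with $\alpha=\epsilon_d(L)$, $\beta=\max_{i\leq d-1}(\epsilon_i-\epsilon_{i+1})$ and then majorizes both by $M$, whereas you set $\alpha=\beta=M$ from the outset.
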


\begin{proof} For two real numbers $\alpha, \beta \geq 0$, let 
\begin{equation*}
\begin{split}
\square_d(\alpha, \beta)& = \square(\alpha+ (d-1)\beta), \alpha + (d-2)\beta, \ldots, \alpha +\beta, \alpha)\\
& = \{(x_1, \ldots, x_d) \in \bR_{\geq 0}^d \ | \   \forall\  1 \leq j \leq d, \ \sum_{i = j}^d x_i \leq \alpha + (d-j)\beta  \}.
\end{split}
\end{equation*}
Using the identity 
\begin{equation*}
\vol(\square_d(\alpha,\beta)) = \int_0^\alpha \vol(\square_{d-1}(\alpha+\beta-t,\beta))dt,
\end{equation*}
one can show by induction that $d!\vol(\square_d(\alpha, \beta)) = \alpha(\alpha + d\beta)^{d-1}$. Let $\theta(L) = \max_{1 \leq i \leq d-1} \epsilon_i(L) - \epsilon_{i+1}(L)$. 
By Proposition \ref{propineqminvol} and by definition of $\theta(L)$, it follows that
\[\vol(L) \leq d!\vol(\square_d(\epsilon_d(L), \theta(L)) = (\epsilon_d(L) + d\theta(L))^{d-1}\epsilon_d(L),\]
and therefore 
\[\vol(L) \leq (d+1)^{d-1} (\max_{1 \leq i \leq d} \epsilon_i(L) - \epsilon_{i+1}(L))^d.\]
\end{proof}

\section{Main theorem}\label{sectionproofmain}

 We shall deduce Theorem \ref{thmmainintro} from the following theorem, which is the central result of this paper. 

\begin{theorem}\label{thmmain} Let $X$ be a projective variety of dimension $d \geq 2$ and let $L$ be a nef and big line bundle on $X$. Let $\mathcal{B} \varsubsetneq X$ be a  countable union of proper subvarieties and let $\alpha$ be a  real number with
\[\alpha < \max_{1 \leq i \leq d-1} \epsilon_i(L) - \epsilon_{i+1}(L).\]
Then there exists a proper subvariety $Z_{\alpha} \varsubsetneq X$ with $Z_\alpha \nsubseteq \mathcal{B}$ and $0 < \dim Z_\alpha < d$, and there exists a dense open subset $U_\alpha \subseteq Z_\alpha$ such that
\[\epsilon(L,z) \geq \min \left\{ \alpha, \epsilon(L_{|Z_{\alpha}},z) \right\}\]
for any point $z \in U_\alpha(k)$.
\end{theorem}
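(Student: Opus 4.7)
The plan is to extract from the hypothesis on $\alpha$ a suitable gap between consecutive successive minima and to use it to produce $Z_\alpha$. I first pick an index $i \in \{1, \ldots, d-1\}$ with $\epsilon_i(L) - \epsilon_{i+1}(L) > \alpha$ together with rational numbers $t < t'$ satisfying $\epsilon_{i+1}(L) < t < t' < \epsilon_i(L)$ and $t' - t > \alpha$. Then I choose a very general closed point $x \in X^{\mathrm{sm}}(k) \setminus \mathcal{B}$, lying outside the countable union $\mathcal{B}_L$ of subsection \ref{paragmingeneral} and satisfying the very general hypothesis of Lemma \ref{lemmadiff}; such a point exists because $k$ is uncountable. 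In particular $\epsilon_i(L,x) = \epsilon_i(L)$ and $\epsilon_{i+1}(L,x) = \epsilon_{i+1}(L)$. Setting $A = \Bs|\mathcal{I}_x^{t+}L|_\bQ$ and $B = \Bs|\mathcal{I}_x^{t'+}L|_\bQ$, monotonicity in the threshold gives $A \subseteq B$, while the defining property of the successive minima translates to $\codim_x A \leq i \leq \codim_x B$. These combine to force $\codim_x A = \codim_x B = i$, and hence the existence of an irreducible component $Z_\alpha$ of $A$ through $x$ of dimension $d-i$ that is also an irreducible component of $B$. By construction $0 < \dim Z_\alpha = d-i < d$, and $Z_\alpha \not\subseteq \mathcal{B}$ since $x \in Z_\alpha \setminus \mathcal{B}$.

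Next I let $U_\alpha$ be the dense open subset of $Z_\alpha$ consisting of smooth points of $X$ that lie in no irreducible component of $B$ other than $Z_\alpha$. For $z \in U_\alpha(k)$ and any irreducible curve $C \ni z$, the irreducibility of $C$ together with the choice of $U_\alpha$ forces either $C \subseteq Z_\alpha$ or $C \not\subseteq B$. In the first case, since $C \subseteq Z_\alpha$ one has $L \cdot C = L_{|Z_\alpha}\cdot C \geq \epsilon(L_{|Z_\alpha}, z)\,\mult_z C$ by definition of the Seshadri constant on $Z_\alpha$. In the second case, the definition of $B$ produces an integer $q \geq 1$ and a section $s \in \Hc(X, qL)$ with $p := \ord_x s > qt'$ and $C \not\subseteq Z(s)$; applying Lemma \ref{lemmadiff} to $Z_\alpha \subseteq A$ yields $\ord_z s \geq p - qt$, and bounding $qL \cdot C$ below by the multiplicity of $s|_C$ at $z$ gives
\[qL \cdot C \;\geq\; \ord_z s \cdot \mult_z C \;\geq\; (p - qt)\mult_z C \;>\; q(t' - t)\mult_z C.\]
Dividing by $q$, I obtain $L \cdot C > (t' - t)\mult_z C > \alpha\,\mult_z C$. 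Taking the infimum over all curves $C \ni z$ gives the required bound $\epsilon(L, z) \geq \min\{\alpha, \epsilon(L_{|Z_\alpha}, z)\}$.

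The main obstacle is the gap step producing the common irreducible component $Z_\alpha$. It rests on the elementary but crucial observation that for $A \subseteq B$ satisfying $\codim_x A \leq \codim_x B$ all three codimensions must agree, and that a top-dimensional component of $A$ through $x$ must then coincide with a component of $B$ through $x$. Beyond this, the remaining work is bookkeeping: imposing countably many very general conditions on $x$ simultaneously (so that the successive minima equal their generic values, Lemma \ref{lemmadiff} applies, and $x \notin \mathcal{B}$), and choosing $U_\alpha$ so that the dichotomy on curves through $z$ really holds. The conceptual gain over \cite{EKL, Na04} is that successive minima package the relevant jump-locus information into a clean numerical invariant, which makes the choice of the thresholds $t, t'$ transparent.
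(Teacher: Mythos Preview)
Your proof is correct and follows essentially the same approach as the paper: choose a gap index $i$, pick rational thresholds $t<t'$ inside the gap, observe that the two base loci share a codimension-$i$ component $Z_\alpha$ through a very general $x\notin\mathcal{B}$, and then treat curves through a general $z\in Z_\alpha$ according to whether they lie in the larger base locus (use the Seshadri constant of $L_{|Z_\alpha}$) or not (use Lemma~\ref{lemmadiff} and the intersection bound $qL\cdot C\geq \ord_z s\cdot\mult_z C$). The only cosmetic difference is that the paper also asks $z$ to be a smooth point of $Z_\alpha$, which is not actually needed for the argument you give.
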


\begin{proof} Let $\alpha$ be a real number  with
\[ \alpha <  \max_{1 \leq i \leq d-1} \epsilon_i(L) - \epsilon_{i+1}(L),\]
and let  $\mathcal{B} \nsubseteq X$ be a  countable union of subvarieties. We assume that $\alpha > 0$, otherwise there is nothing to prove. For a very general point $x \in X(k)\setminus \mathcal{B}$ we have $\epsilon_i(L,x) = \epsilon_i(L)$ for any integer $i > 0$. There exists an index $ i \in \{1, \ldots, d-1\}$ such that 
 \[\epsilon_i(L,x) - \epsilon_{i+1}(L,x) =  \max_{1 \leq j \leq d-1} \epsilon_j(L) - \epsilon_{j+1}(L) > \alpha,\]
 and therefore there are rational numbers $t, t'$ such that $t' - t > \alpha$ and
 \[ \epsilon_{i+1}(L,x) < t < t' < \epsilon_i(L,x). \]
 Let $V = \Bs |\mathcal{I}_x^{t+}L|_\bQ$ and $V' = \Bs |\mathcal{I}_x^{t'+}L|_\bQ$. By definition of the successive minima of $L$ at $x$, we have 
 \[ \codim_x V = \codim_x V' = i,\]
 hence $V$ and $V'$ have a common irreducible component $Z \ni x$ with $\codim_X Z = \codim_x V = i$. In particular $Z \nsubseteq \mathcal{B}$ and $1 \leq \dim Z \leq d-1$. Let $z \in Z(k)$ be a smooth point of $Z$ and $X$ such that $Z$ is the only irreducible component of $V'$ containing $z$.  Let $C \subseteq X$ be an integral curve containing $z$. If $C \subseteq V'$, then $C \subseteq Z$. Since $Z$ is smooth at $z \in C$, $Z$ is smooth at a general point of $C$  and it follows that  
 \[\frac{L \cdot C}{\mult_z C}=\frac{L_{|Z} \cdot C}{\mult_z C} \geq \epsilon(L_{|Z},z).\] 
 Assume now that $C \nsubseteq V' = \Bs |\mathcal{I}_x^{t'+}L|_\bQ$. There exist an integer $m \geq 1$ such that $mt' \in \bN$ and a non-zero  global section $s \in \Hc(X,I_x^{mt'}(mL))$ such that $C \nsubseteq Z(s)$. Since $Z$ is an irreducible component of $V =  \Bs |\mathcal{I}_x^{t+}L|_\bQ$ and $z \in Z$ is in the smooth locus of $X$, Lemma \ref{lemmadiff} implies that $\ord_z s \geq m(t'-t)$. Since $C$ and $Z(s)$ intersect properly, we have
 \[\frac{L \cdot C}{\mult_z C} = \frac{1}{m} \frac{mL \cdot C}{\mult_z C} \geq \frac{1}{m} \frac{\ord_z(s) \mult_z(C)}{\mult_z(C)} \geq t'-t > \alpha.\]
 We have proved that for any integral curve $C \subseteq X$ containing $z$, 
 \[\frac{L \cdot C}{\mult_z C} \geq \min \{ \alpha,  \epsilon(L_{|Z}, z) \}.\]
 Therefore we have
 \begin{equation*}
  \epsilon(L,z)  \geq \min \{ \alpha,  \epsilon(L_{|Z}, z) \}.
 \end{equation*}
Since the point $z$ can be chosen arbitrarily in an open subset of $Z$, the theorem is proved.
\end{proof}

  Given a line bundle $L$ on a projective variety of dimension $d\geq 1$, we let
  \[ \gamma(L) = \max\left\{\frac{\sqrt[d]{(d+1)L^d}}{d+1}, \frac{\epsilon_1(L)}{d} \right\}.\]
  Applying  Lemma \ref{ineqmin} and Theorem \ref{thmmain} inductively, we have the following corollary. 
 \begin{coro}\label{coromain} Let $X$ be a projective variety of dimension $d \geq 1$, let $L$ be a nef and big line bundle on $X$, and let $\mathcal{B} \varsubsetneq X$  be a countable union of proper subvarieties. Then either
\[\epsilon(L,1) \geq \gamma(L)\]  
or there exists a subvariety $Y \nsubseteq \mathcal{B}$ with $0 < \dim Y < d$ such that
\[\epsilon(L,1) \geq  \max\{\epsilon(L_{|Y},1), \gamma(L_{|Y})\}.\]
 \end{coro}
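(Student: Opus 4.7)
The plan is to prove the corollary by induction on $d = \dim X$. For the base case $d = 1$, Theorem~\ref{thmMinkowski} forces $\epsilon_1(L) = \vol(L) = \deg L$, so $\gamma(L) = \deg L$; on the other hand $\epsilon(L,1) = \deg L$ since the only integral curve in $X$ through a smooth point is $X$ itself. Hence the first alternative of the corollary holds, and no proper subvariety of intermediate dimension is needed.

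For the inductive step with $d \geq 2$, assume the corollary in dimensions strictly less than $d$, and suppose $\epsilon(L,1) < \gamma(L)$ (otherwise we are in the first alternative). The telescoping identity $\sum_{i=1}^{d}(\epsilon_i(L) - \epsilon_{i+1}(L)) = \epsilon_1(L)$ yields $\max_{1 \leq i \leq d}(\epsilon_i(L) - \epsilon_{i+1}(L)) \geq \epsilon_1(L)/d$, while Lemma~\ref{ineqmin} gives the companion bound $\max_{1 \leq i \leq d}(\epsilon_i(L) - \epsilon_{i+1}(L)) \geq \sqrt[d]{(d+1)L^d}/(d+1)$; combined, this maximum is at least $\gamma(L)$. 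Since $\epsilon_d(L) - \epsilon_{d+1}(L) = \epsilon(L,1) < \gamma(L)$ by Proposition~\ref{propSeshmin}, the maximum must in fact be attained at some index $i \leq d-1$, so
\[ \max_{1 \leq i \leq d - 1}(\epsilon_i(L) - \epsilon_{i+1}(L)) \geq \gamma(L) > \epsilon(L,1). \]

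Next I pick $\alpha \in (\epsilon(L,1), \gamma(L))$ and, after enlarging $\mathcal{B}$ by a countable union of proper subvarieties (the singular locus of $X$, the exceptional set $\mathcal{B}_L$ on which the successive minima of $L$ fail to be constant, and the complement of the open locus on which restrictions of $L$ remain big), apply Theorem~\ref{thmmain}. This produces a subvariety $Z = Z_\alpha$ with $0 < \dim Z < d$, $Z \not\subseteq \mathcal{B}$, and a dense open $U_\alpha \subseteq Z$ such that $\epsilon(L, z) \geq \min\{\alpha, \epsilon(L_{|Z}, z)\}$ for every $z \in U_\alpha(k)$. Taking $z$ very general in $Z$, the point $z$ is smooth in $X$, so $\epsilon(L,z) \leq \epsilon(L,1)$ by Lemma~\ref{lemmaverygeneral}, and $\epsilon(L_{|Z}, z) = \epsilon(L_{|Z}, 1)$ by the discussion in \S\ref{paragmingeneral}. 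Consequently $\epsilon(L,1) \geq \min\{\alpha, \epsilon(L_{|Z}, 1)\}$, and since $\alpha > \epsilon(L,1)$, this forces $\epsilon(L_{|Z}, 1) \leq \alpha$ and in particular $\epsilon(L,1) \geq \epsilon(L_{|Z}, 1)$.

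Finally, I would apply the inductive hypothesis to the triple $(Z, L_{|Z}, \mathcal{B} \cap Z)$: either $\epsilon(L_{|Z}, 1) \geq \gamma(L_{|Z})$, in which case $Y := Z$ realises the second alternative; or there is a subvariety $Y \subsetneq Z$ with $Y \not\subseteq \mathcal{B}$ and $\epsilon(L_{|Z}, 1) \geq \max\{\epsilon(L_{|Y}, 1), \gamma(L_{|Y})\}$, and chaining with the previous bound again yields the second alternative for $X$. The main subtlety I anticipate is the bookkeeping of countable unions of exceptional loci, so that the identifications $\epsilon(L,z) = \epsilon(L,1)$, $\epsilon(L_{|Z}, z) = \epsilon(L_{|Z}, 1)$, and bigness of $L_{|Z}$ all hold simultaneously at the generic point $z$ supplied by Theorem~\ref{thmmain}; this is handled by absorbing those countable unions into $\mathcal{B}$ at the beginning of each inductive step, which remains a countable union of proper subvarieties.
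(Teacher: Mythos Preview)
Your proof is correct and follows essentially the same approach as the paper's: induction on $d$, the telescoping bound combined with Lemma~\ref{ineqmin} to get $\max_{1\le i\le d}(\epsilon_i(L)-\epsilon_{i+1}(L))\ge\gamma(L)$, the observation that the $i=d$ term equals $\epsilon(L,1)<\gamma(L)$ so the maximum over $i\le d-1$ still dominates $\gamma(L)$, then Theorem~\ref{thmmain} with a suitably enlarged $\mathcal{B}$, Lemma~\ref{lemmaverygeneral} at a very general $z\in Z$, and the inductive hypothesis on $Z$. Your write-up is in fact a bit more explicit than the paper's in justifying the base case and in separating the two outcomes of the induction hypothesis (taking $Y=Z$ when $\epsilon(L_{|Z},1)\ge\gamma(L_{|Z})$), but the argument is the same.
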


Note that
  \[\gamma(L) =  \max\left\{\frac{\sqrt[d]{(d+1)L^d}}{d+1}, \frac{\epsilon_1(L)}{d} \right\} \geq  \frac{\sqrt[d]{L^d}}{d},\]
  with strict inequality when $d \geq 2$.
 Therefore Corollary \ref{coromain} improves \cite[Theorem 3.1]{EKL}, which states that
 \[\epsilon(L,1) \geq \inf_{Y \nsubseteq \mathcal{B}} \frac{\sqrt[\dim Y]{L^{\dim Y} \cdot Y}}{\dim Y} \]
 where the infimum is over all subvarieties $Y \nsubseteq \mathcal{B}$ of dimension $\dim Y \geq 1$. 
 
\begin{proof} If $d =1$, then $\epsilon(L,1) = \gamma(L)$ so the statement holds.  We assume by induction that $d \geq 2$ and that the result is known for projective varieties $Z$ of dimension $\dim Z \leq d-1$. After possibly shrinking $X \setminus \mathcal{B}$, we assume that $X\setminus \mathcal{B}$ is contained in the smooth locus of $X$ and that $L_{|Z}$ is big for any subvariety $Z \nsubseteq \mathcal{B}$. Note that 
\[\max_{1 \leq i \leq d} \epsilon_i(L) - \epsilon_{i+1}(L) \geq \frac{1}{d} \sum_{i=1}^d \epsilon_i(L) - \epsilon_{i+1}(L) = \frac{\epsilon_1(L)}{d},\]
hence $\max_{1 \leq i \leq d} \epsilon_i(L) - \epsilon_{i+1}(L) \geq \gamma(L)$ by Lemma \ref{ineqmin}. Assume that $\gamma(L)  > \epsilon(L,1)$. In that case, we have  
\[\max_{1 \leq i \leq d-1} \epsilon_i(L) - \epsilon_{i+1}(L) \geq \gamma(L) > \alpha,\]
for some real number $\alpha > \epsilon(L,1)$. By Theorem \ref{thmmain} there exists a  subvariety $Z \nsubseteq \mathcal{B}$ with $0 < \dim Z < d$ such that
\[\epsilon(L,z) \geq \min \{\alpha, \epsilon(L_{|Z},1)\}\]
for a very general $z \in Z(k) \setminus \mathcal{B}$. Since $z$ is a smooth point of $X$ we have 
\[\epsilon(L,1) \geq \epsilon(L,z) \geq \min \{\alpha, \epsilon(L_{|Z},1)\}\]
 by Lemma \ref{lemmaverygeneral}, and therefore $\epsilon(L,1) \geq \epsilon(L_{|Z},1)$ since $\alpha > \epsilon(L,1)$. By the induction hypothesis, there exists a positive-dimensional subvariety $Y \subseteq Z \setminus \mathcal{B}$ such that 
 \[\epsilon(L,1) \geq \epsilon(L_{|Z},1) \geq \max \{\epsilon(L_{|Y},1), \gamma(L_{|Y})\}.\]
\end{proof}

 Finally, we note that Theorem \ref{thmmainintro} is a straightforward consequence of Corollary \ref{coromain}.
\begin{coro}\label{corobound} Let $L$ be a nef and big line bundle on a projective variety of dimension $d \geq 2$. 
Then 
\[\epsilon(L,1) \geq \min \{\gamma(L), d^{\frac{1}{d-1}-1}\} \geq (d+1)^{\frac{1}{d}-1}.\]
\end{coro}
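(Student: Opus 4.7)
The plan is to apply Corollary \ref{coromain} with $\mathcal{B}=\emptyset$ and then bound the resulting $\gamma$-quantities from below using only the very weak input that $L^e \cdot Y$ is a positive integer whenever $L_{|Y}$ is nef and big. Corollary \ref{coromain} gives the dichotomy: either $\epsilon(L,1) \geq \gamma(L)$, or there is a proper subvariety $Y \subsetneq X$ with $0 < e := \dim Y < d$ such that $\epsilon(L,1) \geq \gamma(L_{|Y})$. So in every case we obtain $\epsilon(L,1) \geq \gamma(L_{|Y})$ for some nef and big line bundle $L_{|Y}$ on an integral projective variety of some dimension $e \in \{1,\ldots,d\}$ (with $Y=X$ allowed).

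The first main step would be to bound $\gamma(L_{|Y})$ using only that $L^e \cdot Y \in \mathbb{Z}_{\geq 1}$ (after possibly enlarging the countable $\mathcal{B}$ to guarantee bigness of $L_{|Y}$, as in the proof of Corollary \ref{coromain}). This gives
\[\gamma(L_{|Y}) \geq \frac{\sqrt[e]{(e+1)L^e \cdot Y}}{e+1} \geq (e+1)^{\frac{1}{e}-1}.\]
Applied directly with $Y=X$ and $e=d$, this already yields $\gamma(L) \geq (d+1)^{\frac{1}{d}-1}$, establishing one piece of the rightmost inequality.

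To finish, set $f(e) = (e+1)^{\frac{1}{e}-1}$ and show $f$ is decreasing for $e \geq 1$. A short calculation on $\log f(e) = (1-e)\log(e+1)/e$ confirms $f' < 0$: the derivative splits as a nonpositive term $\frac{1-e}{e(e+1)}$ plus a strictly negative term $-\frac{\log(e+1)}{e^2}$. Given this monotonicity, for $1 \leq e \leq d-1$ the previous display yields $\gamma(L_{|Y}) \geq f(e) \geq f(d-1) = d^{\frac{1}{d-1}-1}$, so combined with the first case we obtain $\epsilon(L,1) \geq \min\{\gamma(L),\, d^{\frac{1}{d-1}-1}\}$. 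Applying the same monotonicity between $e = d-1$ and $e = d$ gives $d^{\frac{1}{d-1}-1} = f(d-1) \geq f(d) = (d+1)^{\frac{1}{d}-1}$, matching the already-established bound $\gamma(L) \geq (d+1)^{\frac{1}{d}-1}$ and closing the chain.

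The main obstacle is in fact nonexistent: all the substantive geometric work has already been done in Theorem \ref{thmmain} and Corollary \ref{coromain}. What remains here is the elementary monotonicity check for $f$ and the trivial lower bound $L^e \cdot Y \geq 1$, which together convert the inductive output of Corollary \ref{coromain} into the clean numerical statement.
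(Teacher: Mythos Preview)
Your proof is correct and follows essentially the same route as the paper: choose $\mathcal{B}$ so that $L_{|Y}$ is big (hence $L^{e}\cdot Y\geq 1$) for every $Y\nsubseteq\mathcal{B}$, apply Corollary~\ref{coromain}, bound $\gamma(L_{|Y})\geq (e+1)^{1/e-1}$, and use monotonicity of $e\mapsto (e+1)^{1/e-1}$. The only cosmetic difference is that you start with $\mathcal{B}=\emptyset$ and then enlarge it mid-argument, whereas the paper fixes the correct $\mathcal{B}$ from the outset; it would be cleaner to do the latter.
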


\begin{proof} Let $\mathcal{B} \varsubsetneq X$ be a subvariety such that $L_{|Y}$ is big for any subvariety $Y\subseteq X$ not contained in $\mathcal{B}$. Then for any subvariety $Y \nsubseteq \mathcal{B}$ of dimension $r \in \{1, \ldots, d-1\}$, we have $L^r\cdot Y \geq 1$ and therefore 
\[\max\{\gamma(L_{|Y}), \epsilon(L_{|Y},1)\} \geq \gamma(L_{|Y}) \geq \frac{\sqrt[r]{(r+1)L^r\cdot Y}}{r+1} \geq (r+1)^{\frac{1}{r}-1}  \geq  d^{\frac{1}{d-1}-1}.\]
By Corollary \ref{coromain}, it follows that 
\[\epsilon(L,1) \geq \min \{\gamma(L), d^{\frac{1}{d-1}-1}\}\geq (d+1)^{\frac{1}{d}-1}.\]
\end{proof}

\bibliographystyle{alpha}
\bibliography{seshadritower.bib}

\vspace*{0.5cm}

\vspace*{1cm}

\noindent François Ballaÿ\\
 Université Clermont Auvergne, CNRS, LMBP, F-63000 Clermont-Ferrand, France. \\
 \href{mailto:francois.ballay@uca.fr}{\texttt{francois.ballay@uca.fr}}\\
 \href{http://fballay.perso.math.cnrs.fr}{\texttt{fballay.perso.math.cnrs.fr}}

\end{document}